\numberwithin{equation}{section}
\theoremstyle{plain}
\newtheorem{theorem}{Theorem}[section]
\newtheorem{corollary}[theorem]{Corollary}
\newtheorem{lemma}[theorem]{Lemma}
\newtheorem{proposition}[theorem]{Proposition}
\theoremstyle{definition}
\newcommand{\Sys}{\mathfrak{S}}
\newcommand{\Z}{{\mathbb Z}}
\def\<{\langle}
\def\>{\rangle}
\def\eps{\varepsilon}
\def\Z{{\mathbb Z}}
\def\FF{{\mathbb F}}
\def\Area{\mathop{\rm Area}\nolimits}
\def\1{\mathbf 1}
\begin{document}%
%%%%%%%%%%%%%%%%%

\title{Short incompressible graphs and $2$-free groups}

\author[F.~Balacheff]{Florent Balacheff}

\author[W. Pitsch]{Wolfgang Pitsch}

\address{Florent Balacheff \& Wolfgang Pitsch, Departament de Matem\`atiques, Universitat Aut\`onoma de Barcelona, Barcelona, Spain}

\email{Florent.Balacheff@uab.cat \& Wolfgang.Pitsch@uab.cat}

\keywords{Incompressible graphs, $2$-free groups, systolic area, volume entropy}

\subjclass[2020]{Primary: 53C23; Secondary: 20F05, 20F34}

\thanks{The first author acknowledges support by the FSE/AEI/MICINN grant RYC-2016-19334 and by the FEDER/AEI/MICINN grant PID2021-125625NB-I00. The second author acknowledges support by the  FSE/AEI/MICINN grant PID2020-116481GB-I00. Both authors  acknowledges support by the AGAUR grant 2021-SGR-01015.}

\begin{abstract}
Consider a finite connected $2$-complex $X$  endowed with a piecewise Riemannian metric and whose fundamental group is freely indecomposable, of rank at least $3$, and in which every $2$-generated subgroup is free. In this paper we show that we can always find a connected graph $\Gamma\subset X$ such that $\pi_1 \Gamma \simeq  \FF_2\hookrightarrow\pi_1 X$ (in short, a $2$-incompressible graph) whose length satisfies the following curvature-free inequality: $\ell(\Gamma)\leq 4\sqrt{2\Area(X)}$. This generalizes a previous inequality proved by Gromov for closed Riemannian surfaces with negative Euler characteristic. As a consequence we obtain that the volume entropy of such $2$-complexes with unit area is always bounded away from zero.
\end{abstract}

%%%%%%%%%%%%%%%%%%%%%%%%%%%%%%%%%%%%%%%%%%%%%%%%%%%%%%%%%%%%%%%%%%%%%%%%
\maketitle

%%%%%%%%%%%%%%%%%%%%%%%%%%%%%%%%%%%
%\tableofcontents

%%%%%%%%%%%%%%%%%%%%%%%%%%%%%%%%%%%%%%%%%%%%%%%%%%%%%%%%%%%%%%%%%%%%%%%
\section{Introduction}
%%%%%%%%%%%%%%%%%%%%%%%%%%%%%%%%%%%%%%%%%%%%%%%%%%%%%%%%%%%%%%%%%%%%%%%

We are interested in the geometry of $2$-free groups. Recall that a finitely presented group $G$ is said to be \emph{$k$-free} for some $k\geq 1$ if any of its subgroups generated by $k$ elements is free (possibly of rank $\leq k$). A $1$-free group is just a group without torsion, and a $k$-free group is always $p$-free for any $p\leq k$. Obviously the free group $\FF_n$ with $n\geq 1$ generators is $k$-free for any positive $k$, and prime non-trivial examples of such groups are surface groups of genus $g\geq 2$ which are $(2g-1)$-free. 
Also, observe that the only $2$-free groups with rank at most $2$ are the free groups with one or two generators. According to \cite{AO96} the subclass of $2$-free groups is generic among groups with $3$ generators, which makes this class particularly relevant.

In order to capture this algebraic property geometrically, we first consider the various topological realizations of a group as the fundamental group of some finite $2$-complex, and then study the possible geometries that can be put on these complexes. More precisely, fix a $2$-free finitely presented group $G$ with rank at least $3$ and any finite connected $2$-complex $X$ endowed with a piecewise Riemannian metric such that $\pi_1 X=G$. 
An embedded connected graph $i:\Gamma \hookrightarrow X$ is said to be {\it $2$-incompressible} if (1) $\pi_1 \Gamma\simeq \FF_2$, and (2) the induced map $i_\ast:\pi_1 \Gamma\to \pi_1 X$
 is injective. It is worth saying that we do not require the graph to lie in the $1$-skeleton of $X$, and that we can always find $2$-incompressible graphs since loops lying in the $1$-skeleton generate the fundamental group. We then define 
\[
L_2(X):=\inf_\Gamma \ell(\Gamma)
\]
where the infimum is taken over all $2$-incompressible graphs $\Gamma$ and $\ell(\Gamma)$ denotes the total length of $\Gamma$ for the length metric induced by $X$. This is a metric invariant closely related to the {\it Margulis constant} $\mu(X)$ which is by definition the largest number $L$ such that at any point $x$ the subgroup of $\pi_1 X$ generated by loops based at $x$ with length less than $L$ is cyclic, see \cite[Definition 4.1]{Sab17}. In fact it can be easily checked that 
\begin{equation}\label{eq:L_2VsMarg}
\mu(X)\leq L_2(X)\leq 2\mu(X).
\end{equation}
The natural metric invariant $L_2$ belongs to a larger family of invariants defined as follows. For any finite connected $2$-complex $X$ endowed with a piecewise Riemannian metric define the increasing sequence of positive numbers $\{L_k(X)\}_{k\geq 1}$ by setting $L_k(X):=\inf_\Gamma \ell(\Gamma)$ where the infimum is taken over graphs which are $k$-incompressible (that is, such that $\pi_1 \Gamma\simeq \FF_k \hookrightarrow \pi_1 X$). These numbers are well defined without any particular assumption on the fundamental group of $X$  by setting $L_k(X)=\infty$ if $X$ does not admit any $k$-incompressible graph.  Observe that $L_1(X)$ is nothing but the {\it systole} of $X$ (the shortest length of a non-contractible loop) in the case where the fundamental group of $X$ is $1$-free. So the higher invariants $L_k(X)$ can be thought of as a generalization of the systole.
In this context it is natural to define for any finitely presented group $G$ its {\it $k$-free systolic area} by the formula
$$
\Sys_k(G):=\inf_{\pi_1X=G}{\Area(X)/ L_k^2(X)}
$$
where the infimum is taken over the set of finite connected $2$-complexes $X$ with given fundamental group $G$ and endowed with a piecewise Riemannian metric. 
Note that taking the supremum over the space of all piecewise flat metrics on $X$ would yield the same value, see \cite{AZ67} and \cite[\S 3]{BZ88}.
Obviously $\Sys_k(G)=0$ for any $k\geq 1$ if $G$ is free. For a $1$-free group $G$, the invariant $\Sys_1(G)$ coincides with the notion of systolic area as defined in \cite[p.337]{Gro96}. According to \cite[Theorem 6.7.A]{Gro83}, any $1$-free group $G$ which is not free satisfies the following inequality:
\[
\Sys_1(G)\geq 1/100.
\] 
The current best lower bound  known is $\pi/16$, see \cite{RS08}.
The main purpose of this article is to prove the following analog for $2$-free groups.

%%%%%%%%%%%%%%%%%%%%%%%%%%%%%%%%%%%%%%%%%%%%%%%%%
\begin{theorem}\label{th:L_2}
%%%%%%%%%%%%%%%%%%%%%%%%%%%%%%%%%%%%%%%%%%%%%%%%%
Any $2$-free group $G$ which is freely indecomposable and of rank at least $3$ satisfies the following inequality:
\[
\Sys_2(G)\geq 1/32.
\]
\end{theorem}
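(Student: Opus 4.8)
The plan is to deduce the stated lower bound on $\Sys_2(G)$ from a purely metric statement, namely that every admissible complex $X$ (finite, connected, piecewise Riemannian, with $\pi_1 X\cong G$) carries a $2$-incompressible graph $\Gamma$ with $\ell(\Gamma)\le 4\sqrt{2\Area(X)}$. Once this is established, the definition of $L_2$ gives $L_2(X)\le 4\sqrt{2\Area(X)}$, hence $\Area(X)/L_2^2(X)\ge 1/32$, and taking the infimum over all such $X$ yields $\Sys_2(G)\ge 1/32$. So the entire task is to produce a short $2$-incompressible graph in an arbitrary $X$.

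I would aim for the simplest possible graph: a wedge of two loops $\alpha,\beta$ based at a common point $p$. The point is that the hypotheses reduce the construction to a cyclic-versus-noncyclic question. Suppose I can find $p$ together with two loops $\alpha,\beta\in\pi_1(X,p)$, each of length at most $2\sqrt{2\Area(X)}$, generating a \emph{non-cyclic} subgroup $\langle\alpha,\beta\rangle$. Since $G$ is $2$-free, $\langle\alpha,\beta\rangle$ is free; being $2$-generated and non-cyclic it is free of rank exactly $2$. The induced surjection $\FF_2\cong\pi_1(\alpha\vee\beta)\twoheadrightarrow\langle\alpha,\beta\rangle\cong\FF_2$ is then an isomorphism (finitely generated free groups are Hopfian), so after taking embedded representatives and resolving the finitely many self-intersections of the wedge, the resulting graph $\Gamma$ is $2$-incompressible of length at most $4\sqrt{2\Area(X)}$. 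Thus $2$-freeness does the essential work: I never verify the injectivity of $i_\ast$ directly, only non-commutation.

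It remains to produce such a pair of short independent loops, and this is where the remaining hypotheses enter. I would argue by contradiction, assuming $L_2(X)>4\sqrt{2\Area(X)}$, and set $s=\sqrt{2\Area(X)}$. This assumption forces, at every point $x$, the subgroup $\Gamma_{2s}(x)\le\pi_1(X,x)$ generated by loops of length $<2s$ to be cyclic; otherwise, as above, it would yield a $2$-incompressible wedge of length $<4s=4\sqrt{2\Area(X)}$. So the negation says $X$ is cyclic at scale $s$ everywhere (equivalently $\mu(X)\ge 2s$, in the notation of \eqref{eq:L_2VsMarg}). The heart of the argument is then to show that such uniform cyclicity at the scale $s=\sqrt{2\Area(X)}$ is incompatible with $G$ being freely indecomposable of rank at least $3$. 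I would combine two ingredients. First, a curvature-free area estimate of the form $\Area(B(x,\rho))\ge c\,\rho^2$ for metric balls whose $\pi_1$-image in $X$ is trivial or infinite cyclic; the calibration of $c$ against the coefficient in $s=\sqrt{2\Area(X)}$ (with $c=\tfrac12$ sufficing) is exactly what produces the constant $4\sqrt2$, since a ball of radius $s$ would then carry area at least $\Area(X)$, forcing the cyclic thin parts to be few and genuinely spread out. Second, a Mayer--Vietoris / graph-of-groups analysis: covering $X$ by such cyclically generated balls and examining the nerve exhibits $\pi_1 X$ as an iterated free product of infinite cyclic groups, that is, using that $G$ is torsion-free (being $1$-free), as a free group. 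But a free group is either of rank at most $2$ or freely decomposable, contradicting in both cases the hypothesis that $G$ is freely indecomposable of rank at least $3$.

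The main obstacle I anticipate is precisely this last structural step: upgrading the local statement ``short loops generate cyclic subgroups'' to the global conclusion ``$\pi_1 X$ splits as a free product,'' with enough quantitative control that the area estimate forces the splitting to have too few factors. One must control the topology of the thin part, in particular rule out that a single cyclic ball wraps around several homotopically distinct short loops, and arrange that the nerve of the cover faithfully records a free splitting. Handling the piecewise-Riemannian setting, where the links of vertices are arbitrary graphs rather than circles, is the source of the extra care needed compared with the closed-surface argument of Gromov that this result generalizes.
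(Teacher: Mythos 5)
Your global reduction is correct ($L_2(X)\le 4\sqrt{2\Area(X)}$ for every admissible $X$ gives $\Sys_2(G)\ge 1/32$), and your mechanism for turning two short loops that generate a non-cyclic subgroup into a genuine $2$-incompressible graph (2-freeness forces the subgroup to be $\FF_2$, Hopficity of $\FF_2$ gives injectivity, then extract a rank-two subgraph) is sound; it is the same trick the paper uses in Proposition~\ref{prop:ballsmallradius}. The proof collapses, however, at your two ``ingredients'', which are exactly the hard part and are left unproven --- and the first one is false as stated. There is no curvature-free estimate $\Area(B(x,\rho))\ge c\rho^2$ valid for all balls whose $\pi_1$-image in $X$ is trivial or infinite cyclic: take for $X$ a genus-$2$ surface (its fundamental group is $2$-free, freely indecomposable, of rank $4$) carrying a metric with a long flat tube of circumference $\varepsilon$; for $x$ in the middle of the tube, $B(x,\rho)$ has infinite cyclic image in $\pi_1X$ but area about $2\varepsilon\rho\ll\rho^2$. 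So the local hypothesis ``cyclic image'' alone never yields quadratic growth; the choice of the center is essential. This is precisely why the paper does not argue by contradiction at an arbitrary point: it places the center $x$ on an \emph{almost length-minimizing} $2$-incompressible graph $\Gamma$ and proves (Theorem~\ref{th:geomballs}) the linear growth $\ell(\partial B(x,r))\ge r-\varepsilon$ by a surgery argument --- if the boundary sphere were short, one could delete $\Gamma\cap B$ and splice in the boundary components $C_i$ to produce a shorter $2$-incompressible graph, contradicting near-minimality. Free indecomposability is used there (Lemma~\ref{lem:noZ} and the Van Kampen case analysis), and the area bound then follows from the coarea formula, not from a pointwise ball-growth axiom.

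Your second ingredient --- covering $X$ by cyclically generated balls and reading a free-product splitting off the nerve --- is the step you yourself flag as the main obstacle, and it is not a routine Mayer--Vietoris argument: Van Kampen applied to such a cover presents $\pi_1X$ as a quotient of the free product of the fundamental groups of the pieces (not of their cyclic \emph{images}), amalgamated over the intersections, and nothing in the hypothesis forces the outcome to be a free group or even a nontrivial free product. So what you have is an outline whose two central claims are, respectively, false in the stated generality and unestablished; the missing idea is the paper's anchoring of the whole ball-growth argument at a point of a near-minimizing graph, where minimality itself supplies the contradiction.
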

%%%%%%%%%

Therefore the new invariant $\Sys_2$ is non-trivial for a large natural class of groups. 

Theorem \ref{th:L_2} can be restated as follows: {\it any finite connected $2$-complex $X$ endowed with a piecewise Riemannian metric whose fundamental group is $2$-free and freely indecomposable, but not cyclic, satisfies the following estimate:}
\[
L_2(X)\leq 4\sqrt{2\Area(X)}.
\]
So Theorem \ref{th:L_2} generalizes the result \cite[Theorem 5.4.A]{Gro83} that any Riemannian closed orientable surface $S$ of genus at least $2$ satisfies $L_2(S)\leq 2\sqrt{2\Area(S)}$. Observe that here the assumption on the genus  ensures that the fundamental group $\pi_1 S$ is $2$-free. See also \cite[Theorem 6.6.C]{Gro83} for a higher dimensional generalization of this last inequality. Combined with inequality (\ref{eq:L_2VsMarg}), Theorem \ref{th:L_2} also provides an analog in the context of $2$-complexes of a curvature-free inequality between the volume and the Margulis constant obtained for Riemannian manifolds whose fundamental group is $2$-free, see \cite[Theorem 4.5, item (1)]{Sab17}.

Presently we do not see how to adapt our strategy to prove an analog of Theorem \ref{th:L_2} for $k>2$, but it seems reasonable to conjecture that for each such $k$ the invariant $\Sys_k$ is uniformly bounded from below for any $k$-free group freely indecomposable with rank at least $k+1$.  Also, we do not know how to extend our current proof to encompass the freely decomposable groups: a $2$-complex $X$ with decomposable fundamental group $\pi_1X= G_1 \ast G_2$ does not have to split in any meaningful way in pieces corresponding to the  subgroups $G_1$ and $G_2$.

Lastly, Theorem \ref{th:L_2} implies the following curvature-free inequality relating the volume entropy and the area. Recall that the volume entropy $h(Y)$ of a finite connected complex $Y$ (of any dimension) endowed with a piecewise Riemannian metric is defined as the exponential growth rate of the number of homotopy classes with length at most $L$, namely
\[
h(Y)=\lim_{L\to \infty} \frac{1}{L} \cdot \log \left(\text{card} \{[\gamma] \in \pi_1Y \mid \text{$\gamma$ based loop of length at most $L$}\}\right).
\]
This definition does not depend on the chosen point where loops are based.
As a consequence of Theorem \ref{th:L_2} we get the following.

%%%%%%%%%%%%%%%%%%%%%%%%%%%%%%%%%%%%%%%%%%%%%%%%%
\begin{corollary}\label{cor:h}
%%%%%%%%%%%%%%%%%%%%%%%%%%%%%%%%%%%%%%%%%%%%%%%%%
Any finite connected $2$-complex $X$ endowed with a piecewise Riemannian metric whose fundamental group is $2$-free, freely indecomposable and of rank at least $3$, satisfies the following estimate:
\[
h(X)\cdot \sqrt{\Area(X)}\geq 3\log 2/(4\sqrt{2}).
\]
\end{corollary}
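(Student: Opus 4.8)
The plan is to derive the entropy bound from Theorem \ref{th:L_2} by a standard counting argument along a short $2$-incompressible graph, whose existence and length control are exactly what the theorem provides.

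First I would invoke Theorem \ref{th:L_2} (in its restated form) to produce a connected $2$-incompressible graph $\Gamma \subset X$ with $\ell(\Gamma) \le 4\sqrt{2\,\Area(X)}$ and $\pi_1\Gamma \simeq \FF_2 \hookrightarrow \pi_1 X$. The key point is that the injection $\pi_1\Gamma \hookrightarrow \pi_1 X$ means that distinct free-homotopy classes of loops in $\Gamma$ remain distinct in $\pi_1 X$; hence the growth of homotopy classes in $\pi_1 X$ is at least the growth of homotopy classes in $\pi_1\Gamma \simeq \FF_2$. Consequently $h(X) \ge h(\Gamma)$, where $\Gamma$ is given the length metric induced from $X$, so it suffices to bound $h(\Gamma)$ from below in terms of $\ell(\Gamma)$.

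Next I would estimate $h(\Gamma)$ for a finite metric graph with free fundamental group $\FF_2$. The cleanest route is to collapse $\Gamma$ onto a wedge of two loops (a rose) $R$ by a homotopy equivalence that does not increase length, or more directly to use the fact that the entropy of a metric graph is bounded below by a quantity depending only on the total length and the rank of the fundamental group. For a wedge of two circles of total length $\ell$, counting reduced words of length $\le L$ in $\FF_2$ gives approximately $3^{n}$ words of combinatorial length $n$, each of metric length at most (roughly) $\ell \cdot n /2$ if the two loops have comparable lengths; optimizing the worst case over the distribution of length among the loops yields $h(R) \ge (\log 3)/(\ell/2) = 2\log 3/\ell$ in the balanced configuration, but a cleaner universal bound uses that among any two loops one has length $\le \ell/2$, so one can build at least $2^n$ distinct classes using a single shortest loop together with alternations. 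Carrying this through carefully produces a lower bound of the shape $h(\Gamma) \ge c/\ell(\Gamma)$; tracking the constants to match the target requires $h(\Gamma) \ge 3\log 2 / \ell(\Gamma)$.

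Finally I would combine the two inequalities:
\[
h(X) \ge h(\Gamma) \ge \frac{3\log 2}{\ell(\Gamma)} \ge \frac{3\log 2}{4\sqrt{2\,\Area(X)}} = \frac{3\log 2}{4\sqrt{2}\,\sqrt{\Area(X)}},
\]
which rearranges to the claimed estimate $h(X)\cdot\sqrt{\Area(X)} \ge 3\log 2/(4\sqrt 2)$. The main obstacle is the sharp constant in the second step: one must count homotopy classes in a metric graph of rank $2$ in a way that yields exactly the factor $3\log 2$ rather than a weaker universal constant, which forces a careful choice of how the combinatorial growth of $\FF_2$ is distributed against the metric lengths of the edges (equivalently, a careful lower bound on the translation lengths realizing the exponential growth). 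The first and third steps are routine; the delicate accounting of the constant is where the argument must be done with care.
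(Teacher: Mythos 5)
Your overall architecture coincides with the paper's: produce a short $2$-incompressible graph $\Gamma$ via Theorem \ref{th:L_2}, use the injection $\pi_1\Gamma\hookrightarrow\pi_1X$ to get $h(X)\geq h(\Gamma)$, and then feed in a lower bound of the form $h(\Gamma)\cdot\ell(\Gamma)\geq 3\log 2$. The first and third steps are fine. The gap is the middle inequality, which you treat as a counting exercise to be done ``with care'': in fact the statement that \emph{every} finite metric graph with fundamental group $\FF_2$ satisfies $h(\Gamma)\cdot\ell(\Gamma)\geq 3\log 2$ is precisely the minimal volume entropy theorem for graphs of Kapovich--Nagnibeda \cite{KN07} (see also Lim \cite{Lim08}), and this is what the paper cites at exactly this point. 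The constant $3\log 2$ is sharp and is attained by the theta graph with three edges of equal length (two vertices of degree $3$, so non-backtracking paths branch in $2$ ways every $\ell/3$ units of length), \emph{not} by a rose: the balanced rose gives $2\log 3/\ell$, which is strictly larger. Since the graph $\Gamma$ produced by Theorem \ref{th:L_2} can perfectly well be a theta graph or a dumbbell (these are two of the three combinatorial types listed in the paper), no argument confined to roses can establish the bound you need.

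Moreover, your proposed reduction to a rose goes in the wrong direction. Collapsing a spanning tree gives a $1$-Lipschitz homotopy equivalence $q:\Gamma\to R$; every based loop of length at most $L$ in $\Gamma$ maps to a based loop of length at most $L$ in $R$ representing the corresponding class, so the counting function of $R$ dominates that of $\Gamma$ and one gets $h(\Gamma)\leq h(R)$ with $\ell(R)\leq\ell(\Gamma)$. That is an \emph{upper} bound on $h(\Gamma)$, useless for the corollary. So your sketch, as it stands, cannot be completed into a proof of $h(\Gamma)\geq 3\log 2/\ell(\Gamma)$ by elementary counting on a rose; you either need to reprove the Kapovich--Nagnibeda/Lim theorem (a genuinely non-trivial result, whose extremal configuration is the theta graph) or simply cite it, as the paper does.
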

%%%%%%%%%%%%%%%%%%%%%%%%%%%%%%%%%%%%%%%%%%%%%%%%%

There is no reason for the above constant to be optimal, but this result generalizes the following (sharp) estimate \cite{Kat82} that for $S$ an orientable closed surface whose fundamental group is $2$-free the inequality $h(S)\cdot \sqrt{\Area(S)}\geq 2\sqrt{\pi}$ is always satisfied. This corollary also improves a previous result, due to Babenko and privately communicated to the authors, proving an analog lower bound with a worst constant but valid without the freely indecomposable assumption.

\noindent \textbf{Acknowledgements.} We would like to thank I. Babenko and S. Sabourau for valuable exchanges, and the two anonymous referees for their useful comments.

%%%%%%%%%%%%%%%%%%%%%%%%%%%%%%%%%%%%%%%%%%%%%%%%%%%%%%%%%%%%%%%%%%%%%%%
\section{Topology of small balls in piecewise flat $2$-complexes}
%%%%%%%%%%%%%%%%%%%%%%%%%%%%%%%%%%%%%%%%%%%%%%%%%%%%%%%%%%%%%%%%%%%%%%%

Consider a finite connected $2$-complex $X$ endowed with a piecewise flat metric, and fix a point $x$ in $X$. 
In this section we focus on the topology of closed balls
\[
B(x,r):= \{ y \in X \ | \ d(y,x) \leq r \}
\]
and their boundary spheres 
\[
\partial B(x,r) := \{ y \in X \ | \ d(y,x) = r \}
\]
for relatively small radius $r>0$. 
	 
Our starting point is the following result proved in ~\cite[Corollary 6.8]{KRS},  for which it is crucial that the metric is piecewise flat and not just piecewise smooth.

%%%%%%%%%%%%%%%%%%%%%%%%%%%%%%%%%%%%%%%%%%%%%%%%%
\begin{proposition}\label{prop:nicelevelsets}
	%%%%%%%%%%%%%%%%%%%%%%%%%%%%%%%%%%%%%%%%%%%%%%%%%
For any $r >0$, the triangulation of $X$ can be refined in such a way that both $B(x,r)$ and $\partial B(x,r)$ are CW-subcomplexes of $X$.
\end{proposition}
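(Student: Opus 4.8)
The plan is to analyze the distance function $\rho:=d(\cdot,x)$ and to show that, for the fixed radius $r$, the sphere $\partial B(x,r)=\rho^{-1}(r)$ and the ball $B(x,r)=\rho^{-1}([0,r])$ are cut out inside each closed simplex by only finitely many real-analytic arcs; a refinement of the triangulation adapted to these arcs will then exhibit both sets as CW-subcomplexes with possibly curved cells.

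First I would record the local form of $\rho$ on a single flat $2$-simplex $T$. Since $T$ carries a Euclidean metric, the final portion inside $T$ of any shortest path from $x$ to an interior point $y$ is a straight segment; developing into the plane the simplices that such a path crosses, this segment runs from the developed image of its last source, which is either a vertex $v$ of $X$, a point of an edge, or an unfolded copy of $x$ itself. Consequently there is a finite family of developed sources $p_i$ and constants $c_i\ge 0$ with
\[
\rho(y)=\min_i\bigl(c_i+|y-p_i|\bigr),
\]
where $|\cdot|$ is the Euclidean norm in the development. Each summand is real-analytic away from $p_i$, with circular level sets (straight ones in the degenerate case of a wavefront leaving an edge), and the loci where two summands coincide, i.e. the local cut locus, are arcs of conics. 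Hence inside $T$ the sets $\{\rho=r\}$ and $\{\rho\le r\}$ are bounded by finitely many real-analytic arcs meeting at finitely many points.

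The crucial point, and the reason piecewise \emph{flatness} rather than mere piecewise smoothness is needed, is the finiteness of the family $\{p_i\}$ relevant to the ball of radius $r$. Only shortest paths of length at most $r$ contribute, and in a finite piecewise flat complex the combinatorial types of such paths (the sequence of simplices crossed together with the sources used) form a finite set, each contributing one developed source. I would prove this by a developing argument bounding the number of simplices that a minimizing segment of length $\le r$ can traverse. This is precisely the step that breaks down for a general piecewise smooth metric, where the level sets need not be subanalytic and may fail to admit any finite adapted decomposition; it is therefore the main obstacle.

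Granting this, I would assemble the global refinement as follows. On each edge $e$ of $X$ mark the finitely many points where $\rho=r$ and where the active source changes; these are intrinsic to $e$, so the marks coming from the two incident faces agree after passing to their common finite refinement. Filling in each triangle by the induced arrangement of circular, straight and conic arcs partitions it into finitely many curved cells, and subdividing further if necessary we may assume each cell is a genuine disk. The resulting structure is a CW-refinement of the triangulation of $X$ in which $\partial B(x,r)$ is a union of $0$- and $1$-cells and $B(x,r)$ is a union of cells; alternatively one may simply invoke the existence of a subanalytic triangulation compatible with the subanalytic pair $(\rho^{-1}([0,r]),\rho^{-1}(r))$. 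Once the finiteness and subanalyticity of $\rho$ are secured, the adapted triangulation itself is routine.
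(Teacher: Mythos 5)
Your overall strategy is the right one and matches the source the paper actually relies on: the paper gives no proof of this proposition but quotes \cite[Corollary 6.8]{KRS}, where the point is exactly that piecewise flatness makes the distance function $\rho=d(x,\cdot)$ semialgebraic, after which one takes a triangulation compatible with the semialgebraic pair $\bigl(\rho^{-1}([0,r]),\rho^{-1}(r)\bigr)$. However, your central local formula is wrong as stated. It is \emph{not} true that on a flat $2$-simplex $T$ one has $\rho(y)=\min_i\bigl(c_i+|y-p_i|\bigr)$ over a fixed finite family of developed sources. A min of cone functions can only err \emph{downwards}: for the formula to hold you would need $d(x,y)\le c_i+|y-p_i|$ for \emph{every} source $i$ and every $y\in T$, and this fails whenever $y$ is not ``visible'' from $p_i$ inside the development. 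A concrete counterexample is a flat L-shaped polygon (triangulated, so a piecewise flat $2$-complex) with reflex vertex $v$: in a triangle $T$ that straddles the shadow ray of $v$, the source ``unfolded copy of $x$'' gives the Euclidean value $|y-x|$, which for $y$ in the shadow region is strictly smaller than the true distance $d(x,v)+|y-v|$. The correct local picture is piecewise: finitely many regions of validity (bounded by straight shadow rays as well as by the conic bisector arcs you mention), on each of which $\rho$ agrees with a \emph{single} cone function. Your desired conclusion --- that $\{\rho=r\}$ and $\{\rho\le r\}$ meet each simplex in a set bounded by finitely many algebraic arcs --- survives this correction, but not by the argument you wrote; the clean way to avoid bookkeeping the validity regions is the route of \cite{KRS}: express $\rho$ as a minimum of lengths of polygonal paths with break points constrained to the $1$-skeleton and apply Tarski--Seidenberg.

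The second gap is that the step you yourself flag as crucial --- finiteness of the combinatorial types of shortest paths of length at most $r$ --- is only asserted, with a one-line promise of ``a developing argument.'' In a general $2$-complex this requires genuine care: shortest paths may run along edges (and can leave them only at vertices, into faces not containing that edge), may pass through vertices whose links are graphs rather than circles, and near a vertex may cross all incident faces; one needs a quantitative statement (e.g.\ a uniform lower bound on the length a minimizer spends per face away from vertices, plus a non-winding argument at vertices) to bound the number of break points in terms of $r$. Since both the normal form and the finiteness lemma are the entire mathematical content here --- you say yourself that the final assembly is routine --- the proposal as written does not yet constitute a proof, although it is a reasonable sketch of the proof that \cite{KRS} carries out.
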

%%%%%%%%%%%%%%%%%%%%%%%%%%%%%%%%%%%%%%%%%%%%%%%%%

As a direct consequence we find that

%%%%%%%%%%%%%%%%%%%%%%%%%%%%%%%%%%%%%%%%%%%%%%%%%
\begin{proposition}\label{lem:topologyballgood}
	%%%%%%%%%%%%%%%%%%%%%%%%%%%%%%%%%%%%%%%%%%%%%%%%%
For any $r >0$ and any $x \in X$, the fundamental group of $B(x,r)$ is finitely presented. 
\end{proposition}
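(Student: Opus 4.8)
The plan is to obtain the statement as an essentially immediate consequence of Proposition~\ref{prop:nicelevelsets}, combined with the elementary fact that a finite CW-complex has finitely presented fundamental group. First I would invoke Proposition~\ref{prop:nicelevelsets} to refine the triangulation of $X$ so that $B(x,r)$ becomes a CW-subcomplex. Since $X$ is a \emph{finite} $2$-complex and the refinement provided by that proposition subdivides each of the finitely many original cells into finitely many pieces, the refined complex is again finite; hence $B(x,r)$, being a subcomplex of a finite complex, is itself a finite CW-complex of dimension at most $2$.

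Next I would note that $B(x,r)$ is path-connected, so that its fundamental group is well defined. This uses that $X$ carries a length metric: any $y$ with $d(x,y)\le r$ is joined to $x$ by a path of length arbitrarily close to $d(x,y)$, and every point of such a path lies at distance $\le r$ from $x$ by the triangle inequality, hence inside $B(x,r)$. (Alternatively, one applies the argument below to each connected component separately.) Finally I would appeal to the standard presentation of the fundamental group of a finite connected CW-complex: by cellular approximation $\pi_1 B(x,r)$ depends only on the $2$-skeleton, and choosing a maximal tree $T$ in the $1$-skeleton, the finitely many $1$-cells outside $T$ furnish generators while the finitely many $2$-cells furnish relations. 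This exhibits $\pi_1 B(x,r)$ as finitely presented.

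The argument has no real obstacle once Proposition~\ref{prop:nicelevelsets} is available, since the entire geometric content --- that the ball and its boundary sphere can be realized combinatorially as subcomplexes after a refinement --- is packaged there, and the key point for our purposes is that this refinement keeps the complex finite. The only steps deserving a word of care are precisely the finiteness of the refined complex (which guarantees finitely many generators and relations) and the path-connectedness of the ball; both are routine, so the statement follows directly.
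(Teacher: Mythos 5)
Your proof is correct and follows essentially the same route as the paper: refine the triangulation via Proposition~\ref{prop:nicelevelsets} so that $B(x,r)$ is a subcomplex, observe that compactness of $X$ forces the refined complex (hence the ball) to be a \emph{finite} CW-complex, and conclude that its fundamental group is finitely presented. One small caveat: your path-connectedness argument needs genuine geodesics (which exist since $X$ is a compact length space) rather than paths of length merely ``arbitrarily close'' to $d(x,y)$ --- such almost-minimizing paths can exit the closed ball --- but your fallback of arguing component by component covers this in any case, and the paper itself does not address the point at all.
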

%%%%%%%%%%%%%%%%%%%%%%%%%%%%%%%%%%%%%%%%%%%%%%%%%
\begin{proof}
	According to ~Proposition~\ref{prop:nicelevelsets} choose a refinement of the triangulation of $X$ such that $B(x,r)$ is a CW-subspace of $X$. Since $X$ is compact, any triangulation contains finitely many simplices, as does the triangulation of the closed ball $B(x,r)$. Hence its fundamental group  is finitely presented.  
	\end{proof}

We now turn to the boundary  spheres and show that they generically admit trivial tubular neighborhoods. 

%%%%%%%%%%%%%%%%%%%%%%%%%%%%%%%%%%%%%%%%%%%%%%%%%%%%%%%%%%%%%%%%%%%%%%%
\begin{proposition}\label{prop:boundarytunneugh}
%%%%%%%%%%%%%%%%%%%%%%%%%%%%%%%%%%%%%%%%%%%%%%%%%%%%%%%%%%%%%%%%%%%%%%%
	For all but finitely many values of $r > 0$, the boundary sphere $\partial B(x,r)$ is a finite graph, and for each connected component $C$ of $\partial B(x,r)$, there exists an open neighborhood of $C$ in $X$ homeomorphic to $C \times ]0,1[$.
\end{proposition}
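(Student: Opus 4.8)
The plan is to set up a Morse-type theory for the distance function $f:=d(x,\cdot)\colon X\to\R_{\ge 0}$ and to identify the finitely many critical values outside of which both assertions hold. First I would record the local structure of $f$ coming from the piecewise flat metric: inside each open $2$-cell a minimizing geodesic from $x$ is a straight segment once it has entered the cell, so after developing the preceding chain of triangles into the plane of the cell, $f$ agrees locally with $y\mapsto |y-s|+c$ for a source point $s$ and a constant $c\ge 0$; globally on the cell $f$ is the minimum of finitely many such functions, whence its level sets are finite unions of circular arcs. Likewise on each edge $f$ is a finite minimum of functions of the form $t\mapsto\sqrt{(t-a)^2+h^2}+c$, hence piecewise real-analytic. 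In particular $f$ is nowhere locally constant, so every level set $f^{-1}(r)=\partial B(x,r)$ has empty interior in each $2$-cell; combined with Proposition~\ref{prop:nicelevelsets} this already shows that $\partial B(x,r)$ is a finite graph for \emph{every} $r>0$.

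Next I would define the set $\Sigma\ss\R_{>0}$ of critical values as the union of (i) the distances $d(x,v)$ for $v$ a vertex of $X$; (ii) the values of $f$ at the interior local extrema of the restrictions $f|_e$ to edges $e$; and (iii) the values of $f$ at the points where two branches coincide tangentially to a level circle inside a $2$-cell, i.e. the endpoints and tangency events of the cut locus. The crucial claim is that $\Sigma$ is finite. By the piecewise real-analytic description above each edge contributes finitely many values to (ii), and there are finitely many vertices; the genuinely delicate point is (iii), which amounts to bounding the combinatorial complexity of minimizing geodesics, equivalently to the finiteness of the cut locus of $x$ in $X$. I would establish this by observing that piecewise flatness makes $f$ a \emph{tame} (semialgebraic, or at least subanalytic) function: the developing maps of chains of triangles are affine, the resulting sources are algebraic in the combinatorial data, and the distance is obtained by a finite minimization. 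Tameness then yields finiteness of $\Sigma$; in fact Hardt's triviality theorem provides a finite $\Sigma$ such that over each component of $\R_{>0}\setminus\Sigma$ the map $f$ is a trivial fibration.

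For $r\notin\Sigma$ I would then produce the product neighborhood. Fix a small $\eps>0$ with $[r-\eps,r+\eps]\cap\Sigma=\emptyset$. On each open $2$-cell the locally active branch of $f$ has unit gradient, defining a vector field $V$ with $df(V)=1$; because $r$ avoids the edge extrema and the vertex values, $\partial B(x,r)$ meets each edge transversally at finitely many interior points, and near such a point the triangles sharing the edge form a book along which the fields from the separate $2$-cells assemble into a continuous flow that strictly increases $f$. Integrating this flow over $[r-\eps,r+\eps]$ yields a homeomorphism $\partial B(x,r)\times\,]r-\eps,r+\eps[\;\xrightarrow{\sim}\;f^{-1}(]r-\eps,r+\eps[)$ carrying the second coordinate to $f$; restricting to a single component $C$ of $\partial B(x,r)$ gives a neighborhood homeomorphic to $C\times\,]0,1[$. (Alternatively, once tameness is in hand this product is immediate from the triviality of $f$ over $]r-\eps,r+\eps[$.)

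The main obstacle is the finiteness of $\Sigma$, specifically part (iii): controlling the cut locus of $x$ inside the $2$-cells and ruling out infinitely many combinatorial types of minimizing geodesics. This is exactly the place where piecewise flatness (rather than mere piecewise smoothness) is essential---as already flagged for Proposition~\ref{prop:nicelevelsets}---since it is what makes $f$ semialgebraic and hence tame; I would either invoke this tameness directly or extract the needed finiteness of the cut locus from the structural results of~\cite{KRS}.
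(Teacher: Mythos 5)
Your proposal is correct and rests on the same pillar as the paper's own proof: piecewise flatness makes $f=d(x,\cdot)$ semialgebraic, and semialgebraic (Hardt) triviality over the complement of finitely many values gives both the finiteness of the critical radii and the product neighborhoods --- the paper merely packages this through the Reeb graph $R(f)$ following~\cite{KRS}, which is a cosmetic difference from applying the triviality theorem to $f$ directly, as in your parenthetical remark. One caution: your primary gradient-flow construction is shaky where level sets cross the cut locus inside $2$-cells (the unit ``gradient'' field is discontinuous there even for generic $r$, and such crossings are not excluded by your set $\Sigma$ of tangency events), so the tameness/Hardt route you mention as an alternative --- which is exactly what the paper uses --- should be taken as the actual argument rather than the fallback.
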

%%%%%%%%%%%%%%%%%%%%%%%%%%%%%%%%%%%%%%%%%%%%%%%%%%%%%%%%%%%%%%%%%%%%%%%
\begin{proof}
Denote by $f = d(x,\cdot): X \rightarrow \mathbb{R}_+$ the function {\it distance to the point $x$}. Recall that the Reeb space $R(f)$ is the quotient of $X$ by the relation that identifies two points $y_0$ and $y_1$ if and only if $d(x,y_0)= d(x,y_1)$ and both points belong to the same connected component of the level set $f^{-1}(f(y_0))$. The space $R(f)$ admits a length structure induced from $X$.
By construction we have a canonical projection map $p: X \rightarrow R(f) $ which is $1$-Lipschitz. We argue as in \cite[Section 4]{KRS}: the function $f$ is a semi-algebraic function, then standard arguments show that $R(f)$ is a finite graph and that $R(f)$ admits a finite subdivision such that the natural map $p$ yields a trivial bundle over the interior of each edge. For all distances $r$ but the finitely many ones corresponding to the vertices of the subdivision, if $C$ is a connected component of $f^{-1}(r)$, then by triviality of the bundle the connected component of $p^{-1}(] r-\varepsilon,r+\varepsilon[)$ containing $C$ is  an open neighborhood of $C$ of the desired form provided $\varepsilon$ is small enough. More precisely, $\varepsilon$  has to be chosen at most equal to the shortest distance from $p(C)$ to one of the two ends of the edge containing it.
\end{proof}

In the last part of this section we focus on the image in $X$ of the fundamental group of small metric balls. Consider the map $i_\ast: \pi_1(B(x,r),x) \rightarrow \pi_1(X,x)$ induced by the inclusion $B(x,r)\subset X$. 

According to \cite[Proposition 3.2]{RS08} (see also \cite{KRS}), when $\pi_1X$ is $1$-free, $\text{Im} \, i_\ast$ is trivial if the radius $r$ satisfies $r<L_1(X)/2$. The last result of this section describes how $\text{Im} \, i_\ast$ remains simple under a similar assumption on the radius.

%%%%%%%%%%%%%%%%%%%%%%%%%%%%%%%%%%%%%%%%%%%%%%%%%
\begin{proposition}\label{prop:ballsmallradius}
	%%%%%%%%%%%%%%%%%%%%%%%%%%%%%%%%%%%%%%%%%%%%%%%%
Suppose that $\pi_1 X$ is a $2$-free group and fix  $r\in (0,L_2(X)/4)$. 

Then the image of the map $i_\ast: \pi_1(B(x,r),x) \rightarrow \pi_1(X,x)$ induced by the inclusion $B(x,r)\subset X$  is either trivial, or isomorphic to $\Z$.
\end{proposition}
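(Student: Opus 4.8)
The plan is to prove that the image $H:=\mathrm{Im}\,i_\ast\le\pi_1(X,x)$ is cyclic; since a $2$-free group is in particular $1$-free, hence torsion-free, a cyclic $H$ is automatically either trivial or isomorphic to $\Z$. I would begin by recording two structural facts. First, by Proposition~\ref{lem:topologyballgood} the group $\pi_1(B(x,r),x)$ is finitely presented, so $H$ is finitely generated. Second, refining the triangulation via Proposition~\ref{prop:nicelevelsets} so that $B(x,r)$ becomes a subcomplex with arbitrarily short edges, and choosing a spanning tree of its $1$-skeleton that is geodesic at $x$, the usual lollipop generators $\ell_e=\gamma_u\cdot e\cdot\gamma_v^{-1}$ attached to the non-tree edges $e=[u,v]$ (with $\gamma_u,\gamma_v$ the tree paths from $x$) generate $\pi_1(B(x,r),x)$ and have length at most $2r+\delta$, with $\delta>0$ as small as desired. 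Hence $H$ is generated by finitely many classes $g_1,\dots,g_n$, each represented by a loop based at $x$ of length $<2r+\delta$, and the hypothesis $r<L_2(X)/4$ yields $2(2r+\delta)<L_2(X)$ once $\delta$ is small.

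The conceptual heart is an algebraic dichotomy fed by $2$-freeness. Any two generators $g_i,g_j$ lie in a $2$-generated, hence free, subgroup of rank $\le 2$. If $g_i$ and $g_j$ commute this subgroup is abelian and free, so cyclic, and $g_i,g_j$ are powers of a common element; if they do not commute it is non-abelian of rank $\le2$, hence a copy of $\FF_2$. Since in a free group the centralizer of a non-trivial element is cyclic, the relation ``being powers of a common root'' is transitive on the non-trivial $g_i$, so either all the $g_i$ pairwise commute and are simultaneously powers of a single element $c$, whence $H\le\langle c\rangle\cong\Z$ is cyclic and we are done, or some pair $\alpha:=g_i$, $\beta:=g_j$ does not commute and generates a copy of $\FF_2$ inside $\pi_1X$, realized by two based loops of length $<2r+\delta$.

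The task then reduces to excluding this second alternative, and this is the step I expect to be the only real obstacle. Heuristically, two loops of length $<2r+\delta$ whose classes generate $\FF_2$ should contradict the minimality of $L_2(X)$, because their union has total length $<2(2r+\delta)<L_2(X)$; the difficulty is that $\alpha\cup\beta$ is a priori only an immersed figure-eight, whereas $L_2(X)$ is defined through genuinely \emph{embedded} graphs $\Gamma$ with $\pi_1\Gamma\cong\FF_2$ injecting, and resolving the self- and mutual crossings of $\alpha\cup\beta$ while keeping the induced map on $\pi_1$ injective with image of rank exactly two is delicate. I would sidestep this issue using the Margulis constant: inequality~\eqref{eq:L_2VsMarg} gives $\mu(X)\ge L_2(X)/2>2r$, so one may fix $\delta$ with $2r+\delta<\mu(X)$, making each $g_i$ a loop at $x$ of length $<\mu(X)$. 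By the defining property of $\mu(X)$, applied with threshold $2r+\delta$, the subgroup generated by all loops at $x$ of length $<2r+\delta$ is cyclic; since $H=\langle g_1,\dots,g_n\rangle$ sits inside it, $H$ is cyclic. In particular the non-commuting alternative cannot occur, and $H$, being cyclic and torsion-free, is trivial or isomorphic to $\Z$.
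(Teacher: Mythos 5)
Your skeleton matches the paper's: the image $H$ is finitely generated by Proposition~\ref{lem:topologyballgood}, it is generated by classes of based loops of length at most $2r+\delta<L_2(X)/2$, and everything reduces to showing that no two such loops can generate a copy of $\FF_2$ in $\pi_1(X,x)$. The gap is in how you discharge this last step. You invoke the right-hand half of inequality~\eqref{eq:L_2VsMarg}, i.e.\ $\mu(X)\ge L_2(X)/2$. But that inequality is never proved in the paper (it is asserted in the introduction as ``easily checked''), and the half you need is essentially equivalent to the proposition you are proving: from Proposition~\ref{prop:ballsmallradius} one deduces $\mu(X)\ge L_2(X)/2$ in one line (loops of length $<L<L_2(X)/2$ based at $x$ lie in $B(x,L/2)$ with $L/2<L_2(X)/4$, so they generate a subgroup of a trivial or infinite cyclic group), while your argument runs the implication the other way. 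Any independent verification of $\mu(X)\ge L_2(X)/2$ has to establish exactly what you flagged as ``the only real obstacle'': that two based loops of total length $<L_2(X)$ cannot generate a rank-two free subgroup unless there is a short embedded $2$-incompressible graph. So the Margulis-constant detour does not sidestep the difficulty; it assumes it, and the argument is circular. A telltale symptom: once you grant that reading of $\mu(X)$, your entire $2$-freeness dichotomy becomes superfluous, since cyclicity of $H$ would follow from the Margulis property and finite generation alone; all the group-theoretic content of the proposition has been absorbed into the unproven inequality.

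What the paper does instead is use the definition of $L_2(X)$ directly at this point: the union of any two lollipop loops, each of length $<L_2(X)/2$, is a graph in $X$ of total length $<L_2(X)$, and therefore, by definition of $L_2(X)$ as an infimum over $2$-incompressible graphs, this pair cannot generate a rank-two free subgroup; by $2$-freeness it then generates a cyclic (possibly trivial) group. All the short classes consequently pairwise commute, so they generate a finitely generated torsion-free abelian group $\Z^k$, and $2$-freeness forces $k=1$; hence $\mathrm{Im}\,i_\ast$ is locally cyclic, and being finitely generated it is cyclic, hence trivial or isomorphic to $\Z$. Note also that the paper applies the lollipop decomposition to two \emph{arbitrary} loops in $B(x,r)$, which is what yields local cyclicity of the whole image rather than only of a chosen generating set. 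Your worry about immersed versus embedded figure-eights is legitimate --- the paper passes over it silently, and one can address it, for instance, by straightening the two loops to piecewise geodesic representatives, whose union in a piecewise flat complex is a finite graph --- but the correct response is to confront that issue inside this step, not to reroute through \eqref{eq:L_2VsMarg}, whose proof requires the same confrontation.
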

%%%%%%%%%%%%%%%%%%%%%%%%%%%%%%%%%%%%%%%%%%%%%%%
\begin{proof}
Suppose that $\text{Im} \, i_\ast$ is not trivial. We first prove that $\text{Im} \, i_\ast$ is locally cyclic, that is, every pair of elements in the group generates a cyclic group.

For this let $\gamma_1$, $\gamma_2$  be two non-contractible loops of $X$ contained in $B(x,r)$ and based at $x$. As $\pi_1(X,x)$ is $2$-free, these loops span in $\pi_1(X,x)$ a free subgroup $H(\gamma_1,\gamma_2)$ of rank at most $2$.  Fix $\delta>0$ such that $2r+\delta<L_2(X)/2$. We first decompose each $\gamma_i$ into segments of length at most $\delta$. Then for $i=1,2$ write $\gamma_i$ as a concatenation of loops  $c_{i,1}\ast\ldots \ast c_{i,n_i}$ based at $x$ where each $c_{i,k}$ is made of the union of one of these small segments together with two shortest paths from its extremal points to $x$. Any of these loops $c_{i,k}$ based at $x$ lies by construction in $B(x,r)$ and has length at most $2r+\delta<L_2(X)/2$. So a graph made of the union of any two of these loops is of total length $< L_2(X)$, hence the subgroup in $\pi_1(X,x)$ generated by any of these pairs of loops is cyclic (if not zero). Then the subgroup $H(\{c_{i,j}\})$ generated by all the homotopy classes of the loops $\{c_{i,j}\}$ is abelian as its generators pairwise commute. In particular  there exists some positive $k$ such that $H(\{c_{i,j}\})\simeq  \Z^k$ as $\pi_1X$ is torsion-free. But $\pi_1X$ is also $2$-free so that $k=1$. It implies that  $H(\gamma_1,\gamma_2)=\Z$ and hence $\text{Im} \, i_\ast$ is locally cyclic. 

As $\text{Im} \, i_\ast$ is also finitely generated according to Proposition \ref{lem:topologyballgood}, we deduce that it is cyclic. Furthermore, as $\text{Im} \, i_\ast$ has no torsion, since $\pi_1X$ is torsion-free, it is isomorphic to $\Z$.
\end{proof}

%%%%%%%%%%%%%%%%%%%%%%%%%%%%%%%%%%%%%%%%%%%%%%%%%%%%%%%%%%%%%%%%%%%%%%%
\section{Geometry of small balls in piecewise flat $2$-complexes}
%%%%%%%%%%%%%%%%%%%%%%%%%%%%%%%%%%%%%%%%%%%%%%%%%%%%%%%%%%%%%%%%%%%%%%%

In this section we prove the central technical result of this paper. 

Consider a finite connected $2$-complex $X$ endowed with a piecewise flat metric and whose fundamental group is $2$-free, freely indecomposable and of rank at least $3$. Fix $\eps>0$ and let $\Gamma$ be a $2$-incompressible graph whose length satisfies $\ell(\Gamma)\leq L_2(X)+\eps$. Observe that $\Gamma$ may be choosen with no vertex of degree $1$. Let $x$ be any point on $\Gamma$.

\begin{theorem}\label{th:geomballs}
For all but finitely many values of $r \in (\eps,L_2(X)/4)$, the following inequality holds true:
\[
\ell (\partial B(x,r))\geq  r-\eps.
\]
\end{theorem}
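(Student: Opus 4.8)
The plan is to establish the inequality for every radius $r\in(\eps,L_2(X)/4)$ outside a finite exceptional set, arguing by contradiction. First I would discard the finitely many bad radii: by Proposition~\ref{prop:boundarytunneugh} I may assume that $\partial B(x,r)$ is a finite graph carrying a trivial tubular neighbourhood $\partial B(x,r)\times\,]0,1[$, and since $\Gamma$ is a finite graph and $d(x,\cdot)$ is semi-algebraic I may also assume that $\Gamma$ meets $\partial B(x,r)$ in a finite set $P=\{p_1,\dots,p_m\}$. For such an $r$ Proposition~\ref{prop:ballsmallradius} applies, so the image $i_\ast\pi_1(B(x,r),x)$ is trivial or isomorphic to $\Z$. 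Assuming $\ell(\partial B(x,r))<r-\eps$, the goal is to build a new $2$-incompressible graph $\Gamma'$ with $\ell(\Gamma')<L_2(X)$, which contradicts the definition of $L_2(X)$ as an infimum together with the hypothesis $\ell(\Gamma)\le L_2(X)+\eps$.

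The second step is a lower bound on the length trapped inside the ball. I would pick two loops based at $x$ that generate $\pi_1\Gamma\simeq\FF_2$. They cannot both lie in $B(x,r)$: otherwise their classes would both belong to the cyclic group $i_\ast\pi_1(B(x,r),x)$, forcing the subgroup they span to be cyclic and contradicting $2$-incompressibility. Hence at least one generating loop leaves the ball. Taking it to be an embedded cycle through $x$, the two arcs issuing from $x$ along this cycle must each reach $\partial B(x,r)$, and since that sphere sits at distance $r$ from $x$, each arc has length at least $r$; being disjoint they give $\ell(\Gamma\cap B(x,r))\ge 2r$.

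The third step is the surgery. Cutting $\Gamma$ along $\partial B(x,r)$ splits it into the outside graph $O:=\overline{\Gamma\setminus B(x,r)}$, whose leaves are the points of $P$, and the inside graph $\Gamma\cap B(x,r)$. I would discard the inside graph and reconnect the leaves cheaply, running arcs along the short boundary sphere together with a single shortest segment of length $r$ back towards $x$; the product neighbourhood supplied by Proposition~\ref{prop:boundarytunneugh} makes this cut-and-paste well defined. Writing $\ell(\Gamma)=\ell(O)+\ell(\Gamma\cap B(x,r))$ and combining $\ell(\Gamma\cap B(x,r))\ge 2r$, $\ell(\Gamma)\le L_2(X)+\eps$ and $\ell(\partial B(x,r))<r-\eps$, the new graph would satisfy $\ell(\Gamma')\le\ell(O)+\ell(\partial B(x,r))+r<\bigl(L_2(X)+\eps-2r\bigr)+(r-\eps)+r=L_2(X)$. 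The fact that the reconnection to the centre costs exactly one radius $r$ is what pins the threshold at $r-\eps$ rather than $2r$, matching the constant in the statement.

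The hard part will be the final verification that $\Gamma'$ is again $2$-incompressible, that is $\pi_1\Gamma'\simeq\FF_2$ injecting into $\pi_1 X$. This is precisely where Proposition~\ref{prop:ballsmallradius} is indispensable: since everything supported inside $B(x,r)$ contributes only powers of the single class generating the cyclic group $i_\ast\pi_1(B(x,r),x)$, erasing the inside graph and rerouting through the sphere can alter the homotopy classes of the surviving loops only by powers of that one element, so it cannot drop the rank of the free image below $2$. I expect the delicate points to be (i) organising the reconnection so that two independent generators genuinely survive in $O$ and the resulting graph stays connected across possibly several boundary components — which may require a short preliminary argument that both generators, and not merely one, must exit the ball — and (ii) checking that injectivity into $\pi_1 X$ persists, invoking $2$-freeness once more. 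The case where a generator would remain inside, forcing a cyclic loop to be retained at controlled cost, is the point that needs the most care.
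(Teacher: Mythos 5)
Your reduction and your identification of where the difficulty lies are reasonable, but two steps fail as stated. First, the quantitative claim $\ell(\Gamma\cap B(x,r))\ge 2r$ is false: the generating loop that leaves the ball need not be an embedded cycle through $x$. Take $\Gamma$ to be the dumbbell graph (two circles joined by an arc) with $x$ on a very short circle entirely contained in $B(x,r)$ --- this is possible because $i_\ast\pi_1(B(x,r),x)$ may be $\Z$ rather than trivial, so an essential circle of $\Gamma$ may sit inside the ball. The only generator leaving the ball then runs along the connecting arc, around the far circle, and back along the \emph{same} arc; as a subset of $X$, $\Gamma\cap B(x,r)$ contains that arc only once, so its length can be arbitrarily close to $r$, not $2r$. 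Since your surgery spends an extra $r$ on the segment reconnecting the sphere to $x$, your estimate becomes $\ell(\Gamma')\le (L_2(X)+\eps-r)+(r-\eps)+r=L_2(X)+r$ and the contradiction evaporates. The paper's accounting avoids this: it only uses $\ell(\Gamma_0)\ge r$ (the graph must reach the sphere from its point $x$), pastes the boundary graphs $C_1\cup\ldots\cup C_n$ directly onto $\Gamma\setminus\Gamma_0$ with no segment back to $x$, and extracts the competitor graph from that union; this is exactly what produces the threshold $r-\eps$.

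The more serious gap is the step you defer to the end, which is where essentially all of the paper's work lies. Your heuristic --- everything inside the ball contributes only powers of the generator $a$ of $i_\ast\pi_1(B(x,r),x)$, so erasing the inside and rerouting along the sphere cannot drop the rank --- breaks down precisely when $\partial B(x,r)$ is disconnected: a subarc of $\Gamma$ crossing the ball between two \emph{different} boundary components $C_l$ and $C_{l'}$ cannot be rerouted along $\partial B(x,r)$ at all, and rerouting it through the interior of the ball would forfeit the length saving. This is the configuration the paper must handle group-theoretically, and it is exactly where the hypothesis that $\pi_1X$ is \emph{freely indecomposable} enters --- a hypothesis your proposal never invokes, which is a decisive sign of incompleteness, since the authors state they cannot prove the theorem without it. Concretely, the paper proves Lemma~\ref{lem:noZ} by a Van Kampen argument: if a complementary piece has at least two boundary components, free indecomposability forces every such component to satisfy $i_\ast(\pi_1 C_l)=\Z$; a commutation argument with $a$ (using that the chosen loop $c_i$ does not commute with $a$) then produces two homotopically independent loops supported in $(\Gamma\setminus\Gamma_0)\cup C_l\cup C_{l'}$. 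A separate Van Kampen argument, again using free indecomposability, handles the case $i_\ast(\pi_1 B)=0$, where all but one complementary piece must be $\pi_1$-trivial so that the generators can be homotoped into that piece together with its boundary graph. Without supplying arguments of this kind, your step (i)/(ii) is an assertion of the conclusion rather than a proof of it.
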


In particular, using the coarea formula \cite[3.2.11]{Fed69}, we derive the following lower bound:
\[
\Area (B(x,L_2(X)/4))\geq  (L_2(X)-\eps)^2/32.
\]
This implies that 
\[
\Area(X)\geq L_2(X)^2/32,
\]
which still holds true for piecewise smooth Riemannian metrics by approximation (see \cite{AZ67} and \cite[\S 3]{BZ88}) and implies Theorem \ref{th:L_2}.
\begin{proof}
Fix $r\in (\eps,L_2(X)/4)$ so that Proposition \ref{prop:boundarytunneugh} applies and set $B:=B(x,r)$.
Denote by $X_1,\ldots,X_k$ the path connected components of $X\setminus \text{int}(B)$ with non-empty interior, and  by $C_1,\ldots,C_n$ the connected components of $\partial B$.
According to Proposition \ref{prop:nicelevelsets}, each $C_i$ is a connected finite graph, and there exists an open neighbourhood $U$ of $C_1\sqcup \ldots\sqcup C_n$ in $X$ such that
\[
U \overset{\text{hom}}{\simeq} (C_1\times ]0,1[)\sqcup \ldots\sqcup (C_n\times ]0,1[).
\]
According to Proposition \ref{prop:ballsmallradius}, the inclusion $i: B\hookrightarrow X$ induces a homomorphism of fundamental groups whose image is either trivial or isomorphic to $\Z$. So each graph $C_i$ satisfies either
$i_\ast(\pi_1 C_i)=0$ or $i_\ast(\pi_1 C_i)= \Z$. 
Furthermore, if $\text{rank} \, i_\ast(\pi_1 C_i)=\text{rank} \, i_\ast(\pi_1 C_j)=1$, then the subgroup generated by both these subgroups is a subgroup of $i_\ast(\pi(B)) = \Z$ and hence is again isomorphic to $\Z$.  In particular elements in $ i_\ast(\pi_1 C_i)$ commute with those in $i_\ast(\pi_1 C_j)$.

Let $Y=(X_1\sqcup \ldots\sqcup X_k)/\sim$ where  $x\sim y$ if and only if $x$ and $y$ belong to the same connected component $C_i$ for some $i\in \{1,\ldots,n\}$. Denote by $a_1,\ldots,a_n$ the points in $Y$ that are images of the boundary graphs $C_1,\ldots,C_n$ under the projection  map
\[
f:X_1\sqcup \ldots\sqcup X_k\to Y.
\]
The space $Y$ decomposes into a disjoint union
\[
Y_1\sqcup \ldots \sqcup Y_k
\]
of path-connected components $Y_1,\ldots,Y_k$ such that $X_j=f^{-1}(Y_j)$. 
Define for each $j=1,\ldots,k$ the subset $I_j\subset \{1,\ldots,n\}$ such that 
$a_l \in Y_j \Leftrightarrow l \in I_j$. Therefore $\{1,\ldots,n\}=I_1\sqcup\ldots\sqcup I_k$, and
\[
B \cap X_j=\sqcup_{l \in I_j} C_l.
\]

If $k=n$, we may assume, up to reindexing the boundary graphs that $a_j \in Y_j$ for each $j=1,\ldots,n$ (or equivalently that $I_j=\{j\}$). 

If $k<n$ then $|I_j|\geq 2$ for some $j\in \{1,\ldots,k\}$ and the following holds true.

%%%%%%%%%%%%%%%%%%%%%%%%%%%%%%%%%%%%%%%%%%%%%%%%%%%%%
\begin{lemma}\label{lem:noZ}
%%%%%%%%%%%%%%%%%%%%%%%%%%%%%%%%%%%%%%%%%%%%%%%%%%%%%
	Assume that $|I_j|\geq 2$. Then $i_\ast \, (\pi_1 C_l)=\Z$ for all $l \in I_j$.
\end{lemma}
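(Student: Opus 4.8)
The plan is to argue by contradiction and to manufacture a nontrivial free‑product splitting of $\pi_1 X$ as soon as one boundary component of $X_j$ becomes null‑homotopic in $X$; this is ruled out by the freely indecomposable and rank hypotheses. Fix $l\in I_j$, and use $|I_j|\geq 2$ to pick a second index $l'\in I_j$ with $l'\neq l$. By Proposition~\ref{prop:ballsmallradius}, $i_\ast(\pi_1 C_l)$ is trivial or $\Z$, so I assume $i_\ast(\pi_1 C_l)=0$ and look for a contradiction.

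First I would observe that $C_l$ is non‑separating in $X$. Indeed $B$ is path‑connected (a shortest path from $x$ to any point of $B$ stays inside $B$) and each $X_m$ is path‑connected, so in $X\setminus C_l$ the two pieces $B$ and $X_j$ are still joined along the surviving boundary graph $C_{l'}$, while every other $X_m$ remains attached to $B$ along its own boundary graphs; hence $X\setminus C_l$ is connected. By Proposition~\ref{prop:boundarytunneugh}, $C_l$ (a subcomplex after the refinement of Proposition~\ref{prop:nicelevelsets}) has a two‑sided product neighbourhood $C_l\times{}]0,1[$, so I may cut $X$ along $C_l$ to get a \emph{connected} complex $W$ carrying two copies $C_l^+,C_l^-$ of $C_l$, with $X$ recovered by regluing $C_l^+$ to $C_l^-$.

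The decisive step is to collapse $C_l$ to a point. Since $i_\ast(\pi_1 C_l)=0$, the cofibration identity $X/C_l\simeq X\cup_{C_l}cC_l$ and van Kampen give $\pi_1(X/C_l)\cong \pi_1 X/\langle\!\langle i_\ast(\pi_1 C_l)\rangle\!\rangle=\pi_1 X$: collapsing $C_l$ leaves the fundamental group unchanged. On the other hand $X/C_l$ is obtained from $W$ by capping the two boundary copies $C_l^\pm$ with cones and identifying the two apices to the single point $[C_l]$. Writing $\bar W$ for $W$ with $C_l^+$ and $C_l^-$ capped at two \emph{distinct} apices $p_+,p_-$, the space $X/C_l$ is exactly $\bar W$ with $p_+$ and $p_-$ identified; since identifying two points of a connected complex introduces a free $\Z$ factor, I obtain
\[
\pi_1 X\cong\pi_1(X/C_l)\cong \pi_1\bar W\ast\Z .
\]
If $\pi_1\bar W\neq 1$ this is a nontrivial free product, contradicting free indecomposability; if $\pi_1\bar W=1$ then $\pi_1 X\cong\Z$, contradicting that $\pi_1 X$ has rank at least $3$. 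Either way we reach a contradiction, so $i_\ast(\pi_1 C_l)=\Z$, and since $l\in I_j$ was arbitrary the lemma follows.

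I expect the main difficulty to be producing a genuine \emph{global} free splitting rather than merely killing the image of $\pi_1 C_l$: when $\pi_1 C_l$ is nontrivial but dies in $\pi_1 X$ one cannot simply quote ``a trivial edge group yields a free factor'', and the edge‑to‑vertex maps of the graph of groups attached to the decomposition $X=B\cup X_1\cup\cdots\cup X_k$ need not be injective, which makes a direct HNN/Bass--Serre analysis delicate. The collapse‑to‑a‑point device is tailored to sidestep exactly this: it preserves $\pi_1 X$ while turning the non‑separating two‑sided graph $C_l$ into a local cut point whose two sheets supply the free $\Z$ factor. The only remaining care is the topological bookkeeping ensuring $\bar W$ is connected and the apex identification is legitimate.
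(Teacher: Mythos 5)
Your proof is correct, and it reaches the paper's contradiction by a genuinely different device, though the two key inputs are identical: connectivity of $X\setminus C_l$ (this is exactly where $|I_j|\geq 2$ enters) and triviality of $i_\ast(\pi_1 C_l)$. The paper stays algebraic: it covers $X$ by the product neighbourhood $U_l\simeq C_l\times ]0,1[$ and by $X\setminus C_l$, whose intersection $A_l$ has two components, and invokes the van Kampen theorem in Bourbaki's form for covers with disconnected intersection; the stable letter coming from the two components of $A_l$ is the $\Z$ factor, triviality of $i_\ast(\pi_1 C_l)$ kills the $\pi_1(U_l)$ factor and pushes all edge relations into $\pi_1(X\setminus C_l)$, yielding $\pi_1 X\cong H\ast \Z$ where $H$ is the quotient of $\pi_1(X\setminus C_l)$ by the normal closures of the images of the two sides of the collar. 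Your $\pi_1\bar W$ is precisely this $H$ (coning off $C_l^+$ and $C_l^-$ kills exactly those two images), so both arguments terminate in the same group-theoretic contradiction with free indecomposability and rank at least $3$. What your route buys is elementarity: you replace the disconnected-intersection van Kampen theorem by two standard facts, namely that collapsing a connected subcomplex with $\pi_1$-trivial image preserves $\pi_1$, and that identifying two points of a connected complex wedges on a circle. What it costs is bookkeeping you should state more carefully: first, $X/C_l$ is not \emph{exactly} $\bar W$ with $p_+\sim p_-$; rather $X/C_l=W/(C_l^+\sqcup C_l^-)$, and one identifies it with $\bar W/(p_+\sim p_-)$ only up to homotopy equivalence, by collapsing the contractible wedge of the two cones --- harmless, since only $\pi_1$ matters, but ``exactly'' is inaccurate. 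Second, connectivity of $X\setminus C_l$ needs not only that $B\setminus C_l$ is connected (your shortest-path remark does give this, since a shortest path to $x$ meets $\partial B$ at most at its starting point) but also that $X_j\setminus C_l$ is connected, which follows from the one-sided collar of $C_l$ inside $X_j$ provided by Proposition~\ref{prop:boundarytunneugh}; the paper is equally terse on this point, so these are presentation issues, not gaps.
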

%%%%%%%%%%%%%%%%%%%%%%%%%%%%%%%%%%%%%%%%%%%%%%%%%%%%
\begin{proof}
	By contradiction, let $l \in I_j$ be such that $i_\ast \, (\pi_1 C_l)=0$ and fix a neighborhood $U_l$ of $C_l$ such that $U_l\simeq C_l \times]0,1[$. By construction  $U_l$ is connected, $X = U_l \cup (X \setminus C_l)$,  and because $|I_j| \geq 2$ the open set $X \setminus C_l$ is also connected.
	Observe that  $A_l:=U_l \cap (X \setminus C_l)$ has exactly two connected components and choose a point $x_1$ and $x_2$ in each one of them.  
	Fix a path $\beta$ in $U_l$ and and a path $\gamma$ in $X \setminus C_l$ both going from $x_1$ to $x_2$.
	We denote by $\varphi_1: \pi_1(A_l,x_1) \to \pi_1(U_l,x_1)$ and $\psi_1: \pi_1(A_l,x_1) \to\pi_1(X \setminus C_l, x_1)$ the homomorphisms induced by the respective inclusion maps, and we define two homomorphisms $\varphi_2: \pi_1(A_l,x_2) \to \pi_1(U_l,x_1)$ and $\psi_2: \pi_1(A_l,x_2) \to\pi_1(X \setminus C_l, x_1)$ by setting
	\[
	\varphi_2(\alpha)=\beta \alpha \beta^{-1} \, \, \text{and} \, \, \psi_2(\alpha)=\gamma \alpha  \gamma^{-1}.
	\]
We also define a homomorphism $\mu : \Z\simeq \langle a\rangle \to \pi_1(X,x_1)$ by setting
	\[
	\mu(a)=\beta\gamma^{-1}.
	\]
	By the Van Kampen theorem \cite[p.422, Proposition 2]{bourbaki}, there exists a unique surjective homomorphism
	\[
	M: \pi_1(U_l,x_1) \ast \pi_1(X\setminus C_l,x_1) \ast \Z \rightarrow \pi_1(X,x_1)
	\]  
	which coincides with $\mu$ on the factor $\Z$ and with the homomorphisms induced by the respective natural inclusions on the two factors $\pi_1(U_l,x_1)$ and $\pi_1(X\setminus C_l,x_1)$, and whose kernel is normally generated by the elements of the form
	\begin{enumerate}
		\item $\varphi_2(v) a \psi_2(v)^{-1}a^{-1}$ for $v \in \pi_1(A_l,x_2)$;
		\item $\varphi_1(v)\psi_1(v)^{-1}$ for $v \in \pi_1(A_l,x_1)$.
	\end{enumerate}
	
As the image of $\pi_1C_l\simeq \pi_1(U_l,x_1) $ is trivial in $\pi_1(X,x_1)$, the homomorphisms $M\circ \varphi_1$ and $M\circ \varphi_2$ are trivial, and consequently the surjective homomorphism $M$ factorizes as
	\[
	M: \pi_1(X\setminus C_l,x_1) \ast \Z \rightarrow \pi_1(X,x_1)
	\]   
	with kernel normally generated by the elements of the form
	\begin{enumerate}
		\item $\psi_2(v)$ for $v \in \pi_1(A_l,x_2)$;
		\item $\psi_1(v)$ for $v \in \pi_1(A_l,x_1)$.
	\end{enumerate}
By definition all these relations are written in the group $\pi_1(X\setminus C_l,x_1)$. So if we denote by $H$ the quotient of $\pi_1(X\setminus C_l,x_1)$ by these relations, $M$ induces an isomorphism
\[
\overline{M} : H \ast \Z \rightarrow \pi_1(X,x_1)
\]
contradicting the fact that the fundamental group of $X$ is freely indecomposable and of rank at least $3$.
\end{proof}
We may assume that $\Gamma$ is transverse to $C_1\sqcup\ldots \sqcup C_n$. Because it has no vertex of degree $1$, $\Gamma$ is  one of the three following graphs with first Betti number equal to $2$:   
\begin{center}
\includegraphics[scale=1.2]{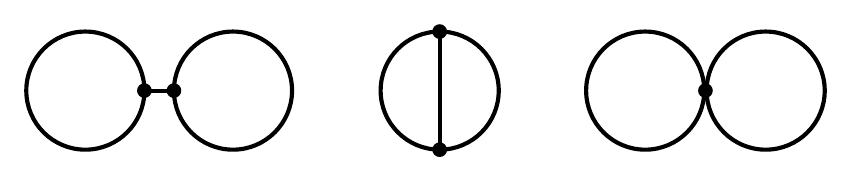}
\end{center} 
As the graph $\Gamma$ is $2$-incompressible, the subgraph $\Gamma\cap B$ has cyclic number at most $1$ according to Proposition \ref{prop:ballsmallradius}, and the graph $\Gamma$ escapes from $B$ and so necessarily  intersects the boundary $C_1\sqcup\ldots \sqcup C_n$.  Set $\Gamma_j:=\Gamma\cap X_j$ and observe that some of these graphs may be empty (but not all).
Furthermore let $\Gamma_0=\Gamma\cap B$ be the remaining part of the graph $\Gamma$ which completes the decomposition as follows: 
\[
\Gamma=\Gamma_0\cup\Gamma_1\cup \ldots \cup\Gamma_k.
\]
Now construct a new graph $\overline{\Gamma}$ starting from $\Gamma$, and obtained by deleting $\Gamma_0$ and pasting all the boundary graphs as follows:
\[
\overline{\Gamma}:=(\Gamma\setminus\Gamma_0)\cup(C_1\cup\ldots\cup C_n).
\]
We shall see that we can always extract from $\overline{\Gamma}$ a $2$-incompressible subgraph $\Gamma'$, and this implies the desired lower bound. Indeed the $2$-incompressible subgraph $\Gamma'$ will satisfy $\ell(\Gamma')\geq L_2(X)$
 as well as $\ell(\Gamma')\leq \ell(\Gamma) -r +\sum_{j=1}^n\ell(C_j)$ as $\ell(\Gamma_0)\geq r$.
Given that $\ell(\Gamma)\leq L_2(X)+\eps$, we get the announced lower bound
\[
\ell(\partial B)\geq  \sum_{j=1}^n\ell(C_j)\geq r-\eps.
\]
To  extract the $2$-incompressible subgraph $\Gamma'$ from $\overline{\Gamma}$, we argue as follows.

%%%%%%%%%%%%%%%%%%%%%%%%%%%%%%%%%%%%%%%%%%%%%%%%%%%%%%%
{\it Suppose first that the inclusion $B\subset X$ induces the zero morphism: $i_\ast(\pi_1 B)=0$.}
%%%%%%%%%%%%%%%%%%%%%%%%%%%%%%%%%%%%%%%%%%%%%%%%%%%%%%%

In particular any boundary component $C$ satisfies $i_\ast(\pi_1C)=0$ as its fundamental group  factors through $i_\ast(\pi_1B)$. Thus Lemma \ref{lem:noZ} implies that $k=n$. The key point is that there exists a unique $j\in \{1,\ldots,n\}$ such that $i_\ast(\pi_1 X_j)\neq 0$. Indeed, given that $i_\ast(\pi_1 B)=0$ and applying the Van Kampen theorem to the covering $\{B,X_1,\ldots,X_n\}$ of $X$, we get that $\pi_1X \simeq \pi_1 X_1 \ast \ldots \ast \pi_1 X_n$.   As $\pi_1 X$ is freely indecomposable, only one of these free factors is non-trivial.
  \begin{center}
      \includegraphics[scale=1.2]{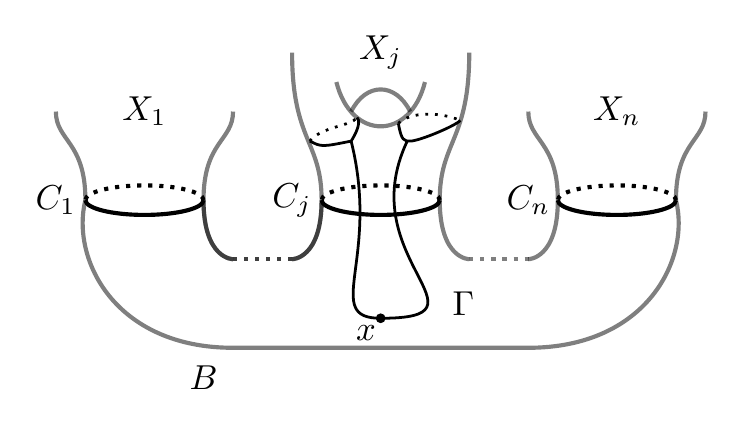}
  \end{center}
So the $2$-incompressible graph $\Gamma$, which has cyclic number $2$, must intersect the boundary graph $C_j$ of the non-trivial piece $X_j$. Fix two homotopically independent loops $c_1$ and $c_2$ of $\Gamma$ based at the same point, say $p$, of the boundary graph $C_j$. By homotopically independent we mean that the two loops generate a free subrgoup of rank $2$ of the fundamental group. If they are not entirely contained in $X_j$, and as $\pi_1(B\cup(\cup_{l\neq j} X_l))=0$, we can for each of the $c_i$'s homotope each of their subarcs  lying outside $X_j$ into a subarc of $C_j$ without moving their respective endpoints. Therefore we can homotope $c_1$ and $c_2$ into two new homotopically independent loops still  based at $p$   and lying in $\Gamma_j \cup C_j\subset \overline{\Gamma}$. Therefore, as wanted, we can extract a $2$-incompressible subgraph from $\overline{\Gamma}$.

%%%%%%%%%%%%%%%%%%%%%%%%%%%%%%%%%%%%%%%%%%%%%%%%%%%%%%%
{\it Suppose now that the inclusion $B\subset X$ induces a morphism of rank $1$: $i_\ast(\pi_1 B)=\Z$.}
%%%%%%%%%%%%%%%%%%%%%%%%%%%%%%%%%%%%%%%%%%%%%%%%%%%%%%%

Fix an element $a$ of $\pi_1B$ that generates $i_\ast(\pi_1 B)=\Z$  and a closed curve $c$ of $\Gamma$  based at $x$ and homotopically independent from $a$. 
The loop $c$ necessarily escapes from $B$. Denote by $p_1,\ldots,p_N$ the intersection points along $c$ with $\partial B$ (it may happen that $p_i=p_{i+1}$ for some $i$). Denote by $\delta_1$ the subpath of $c$ that goes from $x$ to $p_1$, by $\delta_N$ the subpath of $c$ going backwards from $x$ to $p_N$, and fix for $i=2,\ldots,N-1$ any path $\delta_i$ contained in  $B$ from $x$ to $p_i$. We can decompose the loop $c$ into a concatenation of loops $c_i$ based at $x$, each one being made by first following $\delta_i$, then the portion denoted by $\eta_i$ of  $c$ from $p_i$ to the next intersection point $p_{i+1}$, and then going back to $x$ using $\delta_{i+1}^{-1}$. One of these loops must be homotopically independent from the generator $a$ of $\pi_1B$: the loop $c$ does not homotopically commute with $a$, and thus at least one of the $c_i$'s does not homotopically commute with $a$ too. Again, this loop that we denote simply by $c_i$ necessarily  escapes from $B$ and the corresponding portion $\eta_i$ lies outside $\text{int}(B)$. Let $X_j$ be the path connected component of $X\setminus \text{int}(B)$ that contains $\eta_i$.

{\it If $X_j$ has more than one boundary component}, then by Lemma~\ref{lem:noZ} all boundary components are homotopically non-trivial in $B$ and in $X$, and we argue as follows.

Suppose first that the endpoints of  $\eta_i$ belong to two distinct boundary graphs $C_l$ and $C_{l'}$ for some $l\neq l'$. First observe that $l$ and $l'$ both necessarily belong to the same subset $I_j$ as $\eta_i \subset X_j$. Moreover $i_\ast(\pi_1 C_l)=\Z$ and $i_\ast(\pi_1 C_{l'})=\Z$ as already observed. Fix two non-trivial loops $b_l\in C_l$ and $b_{l'}\in C_{l'}$ respectively based at $p_i$ and $p_{i+1}$. Set $\delta=\delta_i^{-1}\ast\delta_{i+1}$. Observe that the homotopy classes of $\eta_i\ast b_{l'}\ast\eta_i^{-1}$ and $c_i\ast(\delta\ast b_{l'}\ast\delta^{-1})\ast c_i^{-1}$ (where $c_i$ is viewed as a loop based at $p_i$) coincide. If the loop $\eta_i\ast b_{l'}\ast\eta_i^{-1}$ was not homotopically independent with $b_l$, then we would have that $[c_i]\cdot a^n \cdot [c_i^{-1}]=a^m$ for some $m,n \in \Z\setminus \{0\}$, as both loops $\delta\ast b_{l'}\ast\delta^{-1}$ and $b_l$ induce homotopy classes in $\pi_1B=\langle a\rangle$. But this is impossible as $c_i$ was chosen homotopically independent from the class $a$. So the two loops $\eta_i\ast b_{l'}\ast\eta_i^{-1}$ and $b_l$ based at $p_i$ are homotopically independent and both contained in $\Gamma_j\cup C_l\cup C_{l'} \subset \overline{\Gamma}$. So their union forms a $2$-incompressible graph $\Gamma'\subset \overline{\Gamma}$.

Now suppose that both endpoints of  $\eta_i$ belong to the same connected boundary component $C_l$, and fix some subarc $\alpha$ in $C_l$ from $p_i$ to $p_{i+1}$. The closed curve $c_i$ (viewed as a loop based at $p_i$) is homotopic to the concatenation of the loop $\eta_i\ast\alpha^{-1}$ with the loop $\alpha\ast  \delta_{i+1}^{-1}\ast \delta_i$. The second loop is included in $B$ and therefore its homotopy class $[\alpha\ast  \delta_{i+1}^{-1}\ast \delta_i]$ is equal to $a^k$ for some $k\in \Z$. Hence  the first loop $\eta_i\ast\alpha^{-1}$ is homotopically independent from $a$. Now 
define $\Gamma''$ to be a subgraph of $C_l$ that contains $\alpha$, such that $\pi_1\Gamma''\simeq \Z$ and $\pi_1\Gamma''\to \pi_1X$ is injective. Then $\Gamma'=\Gamma''\cup \eta_i$ is the desired $2$-incompressible subgraph of $\Gamma$.

%choose a loop $\gamma$ in $C_l$ that is homotopically non-trivial in $\pi_1X$ (and therefore whose homotopy class lies in the cyclic subgroup $\langle a\rangle$), and any path $\tau$ in $C_l$ connecting $\alpha$ and $\tau$. Then
%\[
%\gamma \cup \tau \cup (\eta_i \ast \alpha^{-1}) \subset \overline{\Gamma}
%\]
%is the desired $2$-incompressible  subgraph.

{\it If $X_j$ has a unique boundary component $C_l$}, observe that $i_\ast(\pi_1C_l)\neq 0$. For if it is trivial, by applying the Van Kampen theorem to the covering of $X$ by the open set $X\setminus X_j$ and its complement $X_j$ slightly enlarged so that these two open sets overlap along a half-tubular neighborhood $U\simeq C_l \times]0,1[$ of $C_l$, we would get a non-trivial free decomposition $\pi_1X \simeq \pi_1 X_j \ast \pi_1(X \setminus X_j)$ where both pieces are non-trivial: a contradiction. Finally, because the loop $c_i$ is homotopically independent from the class $a$, we can extract a $2$-incompressible subgraph from $C_l \cup \eta_i \subset \overline{\Gamma}$.

\end{proof}

%%%%%%%%%%%%%%%%%%%%%%%%%%%%%%%%%%%%%%%%%%%%%%%%%%%%%%%%%%%%%%%%%%%%%%%
\section{A Universal bound for the volume entropy}
%%%%%%%%%%%%%%%%%%%%%%%%%%%%%%%%%%%%%%%%%%%%%%%%%%%%%%%%%%%%%%%%%%%%%%%

We conclude by explaining how to derive Corollary \ref{cor:h} from Theorem \ref{th:L_2}.
\begin{proof}[Proof of Corollary \ref{cor:h}] 
Let $X$ be a finite connected $2$-complex $X$ endowed with a piecewise Riemannian metric whose fundamental group is $2$-free, freely indecomposable and of rank at least $3$. 
According to Theorem \ref{th:L_2} we can find a $2$-incompressible graph $\Gamma \hookrightarrow X$ with induced length at most $4\sqrt{2}\sqrt{\Area(X)}$. 
The fact that $\pi_1\Gamma\simeq \FF_2$ implies by \cite{KN07} (see also \cite{Lim08}) that 
$$
\ell(\Gamma)\cdot h(\Gamma)\geq 3 \log 2
$$
where $h(\Gamma)$ denotes the volume entropy of the finite connected $1$-dimensional complex $\Gamma$ for the piecewise Riemannian metric induced by $X$.
The injection $\pi_1\Gamma \hookrightarrow \pi_1 X$ ensures that $h(X)\geq h(\Gamma)$, from which we derive the desired lowerbound:
$$
h(X)\cdot \sqrt{\Area(X)}\geq {1\over 4\sqrt{2}} \, h(\Gamma)\cdot \ell(\Gamma)\geq {3\log 2\over 4\sqrt{2}}.
$$
\end{proof}

%%%%%%%%%%%%%%%%%%%%%%%%%%%%%%%%%%%%%%%%%%%%%%%%%%%%%%%%%%%%%%%%%%%%%%
%%%%%%%%%%%%%%%%%%%%%%%%%%%%%% REFERENCES %%%%%%%%%%%%%%%%%%%%%%%%%%%%
%%%%%%%%%%%%%%%%%%%%%%%%%%%%%%%%%%%%%%%%%%%%%%%%%%%%%%%%%%%%%%%%%%%%%%

%\bibliography{paperbib}
%\bibliographystyle{amsplain}

%%%%%%%%%%%%%%%
\end{document}